\title{Linear combinations of Rademacher random variables
}
\author{Harrie Hendriks, Martien C.A. van Zuijlen}
\date{}
\newtheorem{theorem}{Theorem}[section]
\newtheorem{lemma}[theorem]{Lemma}
\newtheorem{remark}[theorem]{Remark}
\def\qed{\ \rule{0.5em}{0.5em}}
\newenvironment{proof}[1][Proof]{\noindent\textbf{#1}\quad}{\ \rule{0.5em}{0.5em}}
\def\text{\mbox}
\def\R{{\mathbb R}}
\def\P{{\mathbb P}}
\def\E{{\bf E}}
\def\e{{\rm e}}
\def\QQ{\mathcal Q}
\def\SS{\mathcal S}
\def\curlyle{\preccurlyeq}
\def\curlyge{\succcurlyeq}
\begin{document}

\maketitle

\begin{abstract}
For a fixed unit  vector $a=(a_1,a_2,\ldots,a_n)\in S^{n-1},$ 
we consider the $2^n$ sign vectors 
$\varepsilon=
(\varepsilon^1,\varepsilon^2,\ldots,\varepsilon^n)
\in \{+1,-1\}^n$ 
and the corresponding scalar products 
$\varepsilon\cdot a=\sum_{i=1}^n \varepsilon^ia_i$. 
In this paper we will solve for $n=1,2,\ldots,9$ 
an old conjecture stating that of the $2^n$ sums of the form 
$\sum\pm a_i$ it is impossible that there are more with 
$|\sum_{i=1}^n \pm a_i|>1$ than there  are with 
$|\sum_{i=1}^n \pm a_i|\leq1.$ 
Although the problem has been solved completely in case the 
$a_i$'s are equal, the more general problem with possible non-equal 
$a_i$'s remains open for values of $n\geq 10.$ 
The present method can also be used for $n\geq 10,$ 
but unfortunately the technical difficulties seem to grow
exponentially 
with $n$ and no "induction type of argument" 
has been found. 
The conjecture 
has 
an appealing 
reformulation in probability theory and in geometry. 
In probability theory the results lead to upper bounds which are much
 better than for instance Chebyshev-inequalities.
\end{abstract}

\emph{Mathematics Subject Classification (2000):} 60C05, 05A20

\emph{Keywords:} Linear combinations, Rademacher variables, sign vectors, quadratic programming,
linear programming

\section{Introduction and result}
For a given $n$ we consider the set $\SS=\{+1,-1\}^n$ of 
row vectors
$\varepsilon=(\varepsilon^1,\ldots,\varepsilon^n)\in\R^n$ with $\varepsilon^i\in\{+1,-1\}, i=1,\ldots,n$.
Given a column vector $a=(a_1,\ldots,a_n)'$
one may consider the product $\varepsilon a=\sum_{i=1}^n\varepsilon^ia_i\in\R$.
The Euclidean length of $a$ will be denoted by $\|a\|$.
The problem we address is explicitly mentioned in \cite{G}
and acknowledged to Bogus\l{}av Tomaszewki: Of the $2^n$ expressions
$|\varepsilon a|=|\varepsilon^1a_1+\cdots+\varepsilon^na_n|$, $\varepsilon\in\SS$,
 
\centerline {\bf Can there be more with value
$>\|a\|$ than with value $\le\|a\|$?}
\noindent
The result proved in this paper, is the solution for $n\le9$:
\begin{theorem} \label{Main}
Let $n\le9$ and $a\in\R^n$. 
Then 
$\{\varepsilon\in\SS\mid |\varepsilon a|\le\|a\|\}$ contains at least half of the elements of $\SS$.
\end{theorem}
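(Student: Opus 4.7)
The plan begins with normalization. By permuting coordinates and by flipping signs of individual $a_i$ (operations that induce bijections on $\SS$), I may assume $a_1\ge a_2\ge\cdots\ge a_n\ge0$, and by positive scaling, $\|a\|=1$. The statement then reads $|\{\varepsilon\in\SS:|\varepsilon a|\le1\}|\ge2^{n-1}$. Two degenerate cases are immediate: if some $a_k=1$ the remaining coordinates vanish and $\varepsilon a\in\{\pm1\}$ for every $\varepsilon$ (trivially good); if $a_n=0$ the problem reduces to dimension $n-1$, since flipping $\varepsilon^n$ leaves $\varepsilon a$ unchanged, so each good $\varepsilon$ in dimension $n-1$ contributes two good sign vectors in dimension $n$. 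This leaves the generic case $0<a_n\le\cdots\le a_1<1$, and suggests an inductive skeleton on $n$.

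The core strategy is geometric. Consider the feasible set $K=\{a\in S^{n-1}:a_1\ge\cdots\ge a_n\ge0\}$. The $2^{n-1}$ hyperplanes $\{\varepsilon a=1\}$, indexed by unordered pairs $\{\varepsilon,-\varepsilon\}$, partition $K$ into finitely many open cells; on each cell the bad-count $B(a)=|\{\varepsilon\in\SS:|\varepsilon a|>1\}|$ is constant. A counterexample would live in a cell with $B>2^{n-1}$. I would therefore enumerate candidate negation-invariant bad-sets $B\subset\SS$ with $|B|>2^{n-1}$, and for each show that the system
\begin{equation*}
\varepsilon a>1 \text{ (for a chosen sign in each pair of } B),\quad |\varepsilon a|\le 1 \text{ for } \varepsilon\notin B,\quad a\in K,\quad \|a\|=1
\end{equation*}
is infeasible. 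Each such test is a quadratic feasibility problem: the linear inequalities cut out a polytope $P_B\subset\R^n$, and infeasibility is equivalent to $\max_{P_B}\|a\|^2<1$ or $\min_{P_B}\|a\|^2>1$, decidable by standard QP methods (vertex enumeration plus KKT conditions on the faces of $P_B$). Since only finitely many candidate $B$'s arise for each fixed $n$, this in principle settles the question.

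The main obstacle is the combinatorial explosion. For $n=9$ there are $256$ unordered sign-pairs, so naively up to $2^{255}$ candidate bad-sets to rule out. To bring this under control I would prune aggressively: the signed-permutation group acts on $\SS$ and collapses the list into comparatively few equivalence classes; a single-coordinate sign flip changes $|\varepsilon a|$ monotonically, yielding lattice-type constraints that force $B$ to be an up-set for a natural partial order on $\SS$; and a case analysis based on how many $a_i$ lie above the thresholds determined by the cutting hyperplanes further trims the candidate list. Designing a pruning scheme compact enough that the surviving QP catalogue is actually verifiable is where the real difficulty lies, and it explains both the case-by-case character of the argument up to $n=9$ and the authors' warning that the technical workload grows exponentially with $n$.
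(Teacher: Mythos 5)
There is a genuine gap: your text is a plan for a proof, not a proof, and it stops precisely where the content of the theorem lives. The normalization to $a\in\QQ$ with $\|a\|=1$ and the observation that the problem reduces to a finite family of quadratic feasibility questions are both correct and are shared with the paper, but for $n=9$ nothing has actually been verified. Your scheme requires enumerating all negation-invariant ``bad sets'' $B\subseteq\SS$ with $|B|>2^{n-1}$ that are realizable by some cell of the arrangement, and then certifying \emph{infeasibility} of each associated system. You give no bound on the number of surviving candidates after pruning (the arrangement of $256$ hyperplanes in $S^{8}$ has on the order of $\binom{256}{8}$ cells), no infeasibility certificates, and you explicitly concede that designing a workable pruning scheme ``is where the real difficulty lies.'' The inductive skeleton also does not help beyond the degenerate case $a_n=0$; the paper notes that no induction-type argument is known. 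As it stands the argument proves nothing for $n\ge 2$ beyond trivial cases.

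The paper's route is worth contrasting because it inverts the logic and makes the finite verification small and human-checkable. Rather than ruling out bad sets one by one, it partitions $\SS^+$ into conjugate pairs and a handful of $2k$-tuples ($k\le 4$, Table 1) and, for each tuple and each relevant sign pattern, exhibits a \emph{positive} certificate: a row vector $R$ with $\|R\|\le1$ and a convex combination $L=\sum\lambda^\ell\sigma^\ell e_\ell$ with $R-L\in\QQ^*$, whence $\min_\ell|e_\ell a|\le La\le Ra\le\|a\|$ for all $a\in\QQ$ (Lemma \ref{lemma 2}). Checking such a certificate amounts to verifying that the cumulative sums of $R-L$ are nonnegative and that $RR'\le1$, and only eight vectors $R$ (Table \ref{library}) are needed in total. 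Quadratic programming is used there only as a search tool to \emph{find} $(R,\lambda)$, not as part of the verification. If you want to salvage your approach, the missing idea is exactly this dualization: replace the open-ended infeasibility enumeration by a fixed partition of $\SS^+$ into small tuples together with explicit witnesses, so that the case list is of size a few dozen rather than astronomically large.
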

%
Let for the moment $\epsilon_1,\epsilon_2,\ldots,\epsilon_n$ 
be independent Rademacher random variables and let 
$$ S_n:=a_1\epsilon_1+a_2\epsilon_2+\cdots+a_n\epsilon_n,$$
where $a_1,a_2,\ldots,a_n$ are any real numbers such that 
$a_1^2+a_2^2+\cdots+a_n^2=1.$ 
In \cite{P}, Pinelis showed that 
$$\P\{S_n\geq x\}\leq c\,\P\{Z\geq x\} $$ where $Z$ is the standard normal r.v. and $c$ is an almost optimal constant.
He indicated that, 
while $S_n$ represents a simplest case of 
the sum of independent non-identically distributed r.v.'s, 
it is still very difficult to control in a precise manner. 
As a by-product he obtained
$$\P\{S_n\geq x\}\leq h_1(x),$$
%
where 
\begin{align*}
h_1(x)=
\begin{cases}
1, &\text{ for }  x\le0,\\
\frac12,  &\text{ for }  0<x\le1,\\
\frac1{2x^2},  &\text{ for }  1\le x<\sqrt2,\\
g(x),  &\text{ for } \sqrt2\le x<\sqrt3,\\
h(x),  &\text{ for }  x\ge\sqrt3,\\
\end{cases}
\end{align*}

\noindent 
and where $g$ and $h$ are complicated expressions involving the
standard normal density and the standard normal distribution function, 
but $g$ and $h$ are independent of $n$ and of the $a_i$'s.
In \cite{P1}, Pinelis  proved that for $x>0$
$$\P\{S_n\geq x \}
\leq 
\P\{Z> x \}+\frac{C\phi(x)}{9+x^2}<\P\{Z> x \}(1+\frac{C}{x}) ,$$
where $C:=5\sqrt{2\pi\e}\,\P\{|Z|< 1 \}=14.10\ldots,$ 
and where $\phi$ is the standard normal density function.
Holzman and Kleitman showed in \cite{HK} that
$$\P\{|S_n|< 1\}\geq \frac{3}{8},\hbox{ if }|a_i|<1,\hbox{ for }i=1,\ldots,n,$$
so that also
$$\P\{|S_n|\leq  1\}\geq \frac{3}{8},$$
but the conjecture that
$$\P\{|S_n|\leq  1\}\geq \frac{1}{2}$$
remained open. 
In \cite{MVZ}, however, Van Zuijlen proved this conjecture in case of equal $a_i$'s.
Bentkus and Dzindzalieta obtained in \cite{BD} in case  
$a_1^2+a_2^2+\cdots+a_n^2\leq 1$ the inequality
$$\P\{S_n\geq x\}\leq c\P\{Z\geq x\},\text{ for all }x\in \R,$$
where $c=(4\P\{Z\geq \sqrt{2}\})^{-1}=3.178\ldots$ is optimal. 
Bentkus and Dzindzalieta obtained in \cite{BD} the inequality
$$\P\{S_n\geq x\}\leq \frac{1}{4}+\frac{1}{8}(1-\sqrt{2-2/{x^2}}),\text { for } x\in(1,\sqrt{2}].$$
It has also been indicated in \cite[p. 31]{D} how to use Lyapunov type bounds, with explicit constants for the remainder term in the Central Limit Theorem, in order to obtain upper bounds for $\P\{S_n\geq x\}.$ 
Let $Y_1,Y_2,\ldots,Y_n$ be independent rv's such that $\E Y_j=0$ for all $j$. 
Denote $\beta_j=\E|Y_j|^3.$ 
Assume that the sum $U_n=Y_1+Y_2+\cdots+Y_n$ has unit variance. 
Then there exits an absolute constant, say $c_L,$ such that
$$|\P\{U_n\geq x\}-(1-\Phi(x)|\leq c_L(\beta_1+\beta_2+..+\beta_n). $$ 
It is known that $c_L\leq 0.56.$ 
By replacing $Y_j$ by $a_j\epsilon_j$ and using $\beta_j=|a_j|^3\leq \tau a_j^2$
with $\tau=\max\{|a_j|, j=1,\ldots,n\}$ we obtain
$$|\P\{S_n\geq x\}-(1-\Phi(x)|\leq c_L\tau. $$
Dzindzalieta, Ju\v{s}kevi\v{c}ius and \v{S}ileikis proved in \cite{DJS} that, for $M_n=X_1+X_2+\cdots+X_n$ a sum of independent symmetric rv's such that $|X_i|\leq 1,$ 
$$\P\{M_n\in A\}\leq\P\{cW_k\in A\},$$ 
where $A$ is either an interval of the form $[x,\infty)$ or just a single point
and $W_k=\epsilon_1+\cdots+\epsilon_k$.
The optimal values $c$ and $k$ are given explicitly. It improves Kwapie\'n's inequality in the case of Rademacher series.
\\
Oleszkiewicz proved in \cite{O} for $n>1$ and any real numbers $a_1\ge a_2\ge\cdots\ge a_n>0$, that
$$\P\left\{S_n> \sqrt{\sum_{i=1}^n a_i^2}\right\}> \frac{1}{20},$$
and for $n=6$ even that, for any $a_1,\ldots,a_6$,  
$$\P\left\{S_6\geq \sqrt{\sum_{i=1}^6 a_i^2}\right\}\geq \frac{7}{64}.$$
In \cite{HvZ} Hendriks and Van Zuijlen studied, in the case 
$a_1=a_2=\ldots=a_n=n^{-1/2}$, the probabilities $\P\{|S_n|\le\xi\}$ for $\xi\in(0,1]$, depending on $n$.
Furthermore, we have to mention the interesting papers \cite{vH} and \cite{Z} of 
Von Heymann and of Zhubr, where the problem has been studied from a geometrical point of view.
\\
In Section \ref{Methods} we explain how the proof is devised.
In Section \ref{Proofsnew} we exhibit the ingredients of the proof based on Table \ref{Proof scheme new experimental}.
At the end of this section we present a slight variation on the result of Holzman and Kleitman in \cite{HK}.
In the Appendix we collect some ideas about the structure of 
the problem, and we give some background on quadratic programming.

\section{Methods}\label{Methods}
For vectors $x,y\in\R^m$ we will denote by $x\curlyge y$
the property that $x-y\in\R_+^m$, where $\R_+=[0,\infty)$. 
For $x\in\R^m$ and $y\in\R$ we will denote by $x\curlyge y$
the property that $x\curlyge\underline1\,y$ where $\underline1\in\R^m$ denotes the
vector with all coefficients equal to 1.
\\
In considering the number of elements of 
$\{\varepsilon\in\SS\mid |\varepsilon a|\le\|a\|\}$
it is clear that without loss of generality we may 
assume that 
$a\in\QQ:=\{x=(x_1,\ldots,x_n)'\mid x_1\ge x_2\ge\cdots\ge x_n\ge0\}$.
Let $\SS^+$ be the subset of $\SS$ of sign vectors 
$\varepsilon=(\varepsilon^1,\ldots,\varepsilon^n)$ with 
$\varepsilon^1=+1$.
It is clear that $\{\varepsilon\in\SS^+\mid |\varepsilon a|\le\|a\|\}$
(resp. $\SS^+$)
contains half of the elements of 
$\{\varepsilon\in\SS\mid |\varepsilon a|\le\|a\|\}$ (resp. $\SS$) 
so it is sufficient to prove that for all $a\in\QQ$
at least half of the elements $\varepsilon\in\SS^+$ satisfy 
$|\varepsilon a|\le\|a\|$.
\\
We introduce the \emph{dual} $\QQ^*$
of $\QQ$ to be defined as the set of row vectors 
$V$ such that $Va\ge0$ for all $a\in\QQ$.
Since $\QQ$ is the set of linear combinations with non-negative coefficients of the $n$ elements
(counting the number of ones) of the form $(1,\ldots,1,0,\ldots,0)$,
for $V$ to belong to $\QQ^*$ it is necessary and sufficient that the vector of cumulative sums of $V$,
$\overline V:=(V^1,V^1+V^2,\ldots,V^1+V^2+\cdots+V^n)$, satisfies $\overline V\curlyge0$.
%
%
\\
Basically the proof of Theorem \ref{Main} is a compilation of cases where we use the following
argument:
\begin{lemma}\label{lemma 2}
  Let $e_1,\ldots,e_k\in\SS$ for some $k\ge1$ and $\sigma^1,\ldots,\sigma^k\in\{+1,-1\}$.
  Suppose there exist row vectors $R\in\R^n$ and $\lambda=(\lambda^1,\ldots,\lambda^k)\curlyge0$  
  with $\|R\|\le1$ and
  $\lambda^1+\cdots+\lambda^k=1$,
 such that
   $R-L\in Q^*$ 
  (i.e. $La\le Ra$ for all $a\in\QQ$), where
  $L=\sum \lambda^\ell\sigma^\ell e_\ell$.  
  Then for all $a\in\QQ$ we have the implications
  $$\left[ \forall\ell:\sigma^\ell e_\ell a\ge0 \right]
    \Rightarrow\left[ \min_\ell |e_\ell a|\le La\le Ra\le \|R\|\,\|a\|\le\|a\| \right]
	\Rightarrow\left[ \exists\ell:|e_\ell a|\le\|a\| \right].\qed
$$
\end{lemma}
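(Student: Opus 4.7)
The plan is to verify the stated chain of inequalities link by link; the lemma is really a bookkeeping device packaging four elementary estimates, and there is no deep step. The second implication is trivial: if $\min_\ell |e_\ell a|\le\|a\|$, then any index attaining the minimum serves as a witness to $\exists\ell:|e_\ell a|\le\|a\|$.

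For the first implication I would proceed from left to right. Under the hypothesis $\sigma^\ell e_\ell a\ge 0$ and $\sigma^\ell\in\{\pm1\}$, we get $\sigma^\ell e_\ell a=|e_\ell a|$, so $La=\sum_\ell\lambda^\ell\sigma^\ell e_\ell a=\sum_\ell\lambda^\ell|e_\ell a|$. Since $\lambda\curlyge 0$ and $\sum_\ell\lambda^\ell=1$, this expression is a convex combination of non-negative reals, hence bounded below by $\min_\ell|e_\ell a|$, giving the first link. The step $La\le Ra$ is simply the hypothesis $R-L\in\QQ^*$ evaluated at $a\in\QQ$, by the definition of the dual cone. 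The step $Ra\le\|R\|\cdot\|a\|$ is the Cauchy--Schwarz inequality for the row--column product $Ra=\sum_i R^i a_i$. The final step $\|R\|\cdot\|a\|\le\|a\|$ uses only $\|R\|\le 1$.

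I do not anticipate any real obstacle; the $\qed$ placed directly after the statement confirms that the author regards the argument as transparent once the chain is written out. The substantive use of the lemma will come afterwards: the effort in proving Theorem~\ref{Main} will lie in producing, for each relevant sign configuration on $\SS^+$, an explicit certificate $(R,\lambda,\{(\sigma^\ell,e_\ell)\})$ that simultaneously achieves $\|R\|\le 1$, $\lambda\curlyge 0$ with $\sum_\ell\lambda^\ell=1$, and $R-L\in\QQ^*$, the last being checkable via the cumulative-sum criterion $\overline{R-L}\curlyge 0$ noted just above the lemma. Assembling enough such certificates to cover at least half of $\SS^+$ is where the technical difficulties alluded to in the abstract (and the case-by-case structure of Section~\ref{Proofsnew}) will arise.
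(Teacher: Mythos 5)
Your verification is correct and matches the paper's (implicit) argument exactly: the paper places the $\qed$ in the statement and offers no separate proof, treating the chain as self-evident, and your link-by-link justification — convex combination bounded below by the minimum, the dual-cone hypothesis giving $La\le Ra$, Cauchy--Schwarz, and $\|R\|\le1$ — is precisely the intended reasoning. No gaps.
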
  
Given the signs $\sigma^1,\ldots,\sigma^k$, there are
many combinations $e_1,\ldots,e_k\in\SS$ that share the same 
vectors $\lambda$ and $R$, although, generally, 
the vector $L$ depends
on the particular combination. 
Suppose $R$ and $\lambda$ as in Lemma \ref{lemma 2} are found, 
so that in particular $\|R\|\le1$.
Then, without changing its norm, by taking absolute values and sorting
the coefficients of vector $R$, one can arrange that $R'\in\QQ$ and still 
satisfies the conditions of Lemma \ref{lemma 2}.
In the Appendix it is shown, that $R$ and $\lambda$ can be found as in
Lemma \ref{lemma 2}, such that $R$ itself satisfies $R'\in\QQ$ and
$ \sigma^\ell e_\ell R'\ge0,\ \ell=1,\ldots,k$,
but such restriction on $R$ would lead to unnecessary complications in our arguments.
\\
We enumerate the elements of $\SS$ as follows:
$$\varepsilon=(\varepsilon^1,\ldots,\varepsilon^n)=\varepsilon_i
\text{ with }
i=\sum_{j=0}^{n-1}\frac{1-\varepsilon^{n-j}}22^j.$$
Then $\SS^+$ is the subset of $\SS$ of the $\varepsilon_i$ with $0\le i<2^{n-1}$.
Often we will identify $\SS^+$ with the numerals $\{i\mid 0\le i<2^{n-1}\}$.
In general we will call an ordered set $(i_1,\ldots,i_k)\in(\SS^+)^k$ a \emph{$k$-tuple}
(or \emph{pair}, for $k=2$) and call it a \emph{$k$-tuplet} (or \emph{twin}, for $k=2$), 
if for any $a\in\QQ$
$\#\{\ell\mid|\varepsilon_{i_\ell}|\le\|a\|\}\ge k/2$.
 A pair $(i,j)\in\SS^+\times\SS^+$ will be called \emph{conjugate}
if $i+j=2^{n-1}-1$ or, equivalently
$\frac12\varepsilon_i+\frac12\varepsilon_j=(1,0,\ldots,0)$.
We will refer to $i$ and $j$ as the legs of the pair.
\\
\begin{proof}[Proof of Theorem \ref{Main}]
It is rather obvious to see, that we need only consider the case
$n=9$.
The proof follows the scheme given in Table \ref{Proof scheme new experimental}.
The table gives a partition of the sign vectors in $\SS^+$ in unions of conjugate pairs enclosed
in outer parentheses, and the conjugate twins that are not mentioned in the table.
In the next section we will prove that the thus given $2k$-tuples form $2k$-tuplets.
\end{proof}

\section{Proofs}
\label{Proofsnew}
We base the proof of Theorem \ref{Main} on the proof scheme in Table \ref{Proof scheme new experimental}. 
Recall that in proving Theorem \ref{Main} it is enough to show
that for all $a\in\QQ$, for at least half of the sign-vectors
$\varepsilon\in\SS^+$ 
it holds that $|\varepsilon a|\le\|a\|$.

\begin{table}[hbt]
\setlength{\tabcolsep}{2.5pt}

\begin{tabular}{cccccccccc||ccccccc}
(0&255&(94&161)&(105&150)&(109&146))  &&&& (32&223&(102&153)) \\
(1&254&(95&160)&(104&151)&(108&147))  &&&& (33&222&(103&152)) \\
(2&253&(93&162)&(106&149))  &&&&&& (34&221&(101&154)) \\
(3&252&(92&163)&(107&148))&&&&&& (35&220&(100&155)) \\
(4&251&(91&164)&(99&156))&&&&&&\\
(5&250&(90&165)) &&&&&&&& (64&191&(54&201))\\
(6&249&(89&166)) &&&&&&&& (65&190&(55&200))\\
(7&248&(88&167)) &&&&&&&& (66&189&(53&202))\\
(8&247&(87&168)&(113&142)) &&&&&& (67&188&(52&203)) \\
(9&246&(86&169)) &&&&&&\\
(10&245&(85&170)) &&&&&&&& (128&127&(58&197))  \\
(11&244&(84&171)) &&&&&&&& (129&126&(59&196))  \\
(12&243&(83&172)) &&&&&&&&  (130&125&(57&198)) \\
(13&242&(82&173))&&&&&&&& (131&124&(56&199))\\
(14&241&(81&174))\\
(15&240&(80&175))\\
(16&239&(79&176)&(114&141))\\
(17&238&(78&177)) \\
(18&237&(77&178))\\
(19&236&(76&179))\\
(20&235&(75&180))\\
&&& \\
(24&231&(71&184)) 
\end{tabular}
\caption{Proof scheme for $n=9$, numbers should not appear twice.
Terms enclosed in matching parentheses correspond to
twins, 4-tuples, 6-tuples, 8-tuples.
Non mentioned numbers belong to conjugate twins.
}\label{Proof scheme new experimental}
\end{table}

\subsection*{Conjugate twins}
A pair $(i,j)$ or $(\varepsilon_i,\varepsilon_j)\in\SS^+\times\SS^+$ 
with $i\ne j$ 
is a \emph{twin} if for all $a\in\QQ$
it holds that $\min(|\varepsilon_ia|,|\varepsilon_ja|)\le\|a\|$.
Of course, if we could partition $\SS^+$ into disjoint twins, we would have proved
Theorem \ref{Main}.
Unfortunately, such partitioning is not possible.
\\
Let $R_1=(1,0,\ldots,0)\in\R^n$ and $R_0=(0,\ldots,0)\in\R^n$.
Conjugate pairs $(i,j)$ are rather special in the following sense:
if $a\in\QQ$, then $\frac12\varepsilon_ia+\frac12\varepsilon_ja=R_1a=a_1\ge0$.

\begin{remark}\label{++new}
Let $(\varepsilon_i,\varepsilon_j)$ be a conjugate pair and $a\in\QQ$. 
Then
$$R_1-\frac12\varepsilon_i-\frac12\varepsilon_j=0\in\QQ^*
\hbox{ and }R_0+\frac12\varepsilon_i+\frac12\varepsilon_j=R_1\in\QQ^*.$$
Then
\begin{itemize}
\item
If $\min(\varepsilon_ia,\varepsilon_ja)\ge0$, then
$\min(|\varepsilon_ia|,|\varepsilon_ja|)\le|R_1a|\le\|a\|$
 (Lemma \ref{lemma 2}, with $k=2$ and $\sigma^1=\sigma^2=1$)
\item
If both 
$\varepsilon_ia\le0$ and $\varepsilon_ja\le0$ then $\min(|\varepsilon_ia|,|\varepsilon_ja|)\le R_0a=0\le\|a\|$  (Lemma \ref{lemma 2}, with $k=2$ and $\sigma^1=\sigma^2=-1$).
In fact, then $a_1=R_1a\le0$ so that $a=0$.
\end{itemize}
Notice that $|\min(\varepsilon_ia,\varepsilon_ja)|=\min(|\varepsilon_ia|,|\varepsilon_ja|)$.
\\
Thus  $(\varepsilon_i,\varepsilon_j)$ is a twin if and only if
$\left[\min(\varepsilon_ia,\varepsilon_ja)<0\right]
\Rightarrow \left[\,|\min(\varepsilon_ia,\varepsilon_ja)|\le\|a\|\right]$.
\qed
\end{remark}
From now on we assume $n=9$.
Let $R_2=\frac12(1,1,1,1,0,\ldots,0)\in\R^9$ and 
$R_3=\frac13(1,\ldots,1)\in\R^9$.

\begin{theorem}\label{Twinsnew} 
Let $n=9$.
Of all the 128 conjugate pairs 
$(\varepsilon_i,\varepsilon_j)\in\SS^+\times\SS^+$ with $i<j,$ exactly 34 of them  are non-twin,
namely  the pairs
$$(i,j)=(255-j,j),\text{ with } j=128, \ldots,131,188,\ldots,191,220,\ldots,223,231,235,\ldots,255.$$
\end{theorem}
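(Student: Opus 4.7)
Building on Remark \ref{++new}, the identity $\varepsilon_i+\varepsilon_j=2R_1$ forces $\varepsilon_i a+\varepsilon_j a=2a_1\ge 0$ on $\QQ$, so at most one of $\varepsilon_i a,\varepsilon_j a$ can be negative, and when one is, its modulus exceeds the other's only by $2a_1$. The pair therefore fails to be a twin iff there exists $a\in\QQ$ with $-\varepsilon_i\cdot a>\|a\|$ or $-\varepsilon_j\cdot a>\|a\|$. Setting
\[
  c(\varepsilon):=\sup_{a\in\QQ\setminus\{0\}}\frac{(-\varepsilon)\cdot a}{\|a\|},
\]
the pair is a twin iff $\max(c(\varepsilon_i),c(\varepsilon_j))\le 1$.

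Specialising Lemma \ref{lemma 2} to $k=1$, $\sigma^1=-1$, $e_1=\varepsilon$, $\lambda^1=1$ turns $c(\varepsilon)\le 1$ into the existence of $R\in\R^9$ with $\|R\|\le 1$ and $R+\varepsilon\in\QQ^*$; writing $s_k:=\sum_{l\le k}\varepsilon^l$ and $\bar R_k:=\sum_{l\le k}R^l$, the latter condition reads $\bar R_k\ge -s_k$ for $k=1,\dots,9$. In fact the minimum-norm admissible $R$ is the orthogonal projection of $-\varepsilon$ onto $\QQ$, given in closed form by the pool-adjacent-violators algorithm applied to $(s_k)$, with $\|R\|=c(\varepsilon)$.

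The proof splits into two parts. \emph{Part I (the $34$ non-twin pairs).} For each listed $j$ I exhibit an explicit $a\in\QQ$, $\|a\|=1$, with $\varepsilon_{255-j}\cdot a<-1$. Most cases collapse onto a single witness: the uniform vector $a=\tfrac13(1,\dots,1)$ certifies non-twin for every $\varepsilon\in\SS^+$ with at least seven coordinates equal to $-1$, while vectors of the form $a=m^{-1/2}(1,\dots,1,0,\dots,0)$ (with $m$ read off from the longest leading run of $-1$'s after coordinate two) handle the remaining non-twin legs; for instance, $(127,128)$ is resolved by $\varepsilon_{127}\cdot\tfrac13(1,\dots,1)=-5/3$. \emph{Part II (the $94$ twin pairs).} For each remaining pair and each of its two legs $\varepsilon$, I display an admissible $R$: legs whose partial sums $s_k$ stay non-negative (Dyck-type sign vectors) are certified by $R=0$, and the rest by one of a short list of explicit $R$'s, typically $R_1$, $R_2$, $R_3$ or a piecewise-constant perturbation dictated by the tight cumulative-sum constraints.

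The principal obstacle is the bookkeeping in Part II, with 188 legs to certify by hand. The saving feature is that the certificate $R$ depends only on the partial-sum profile $(s_1,\dots,s_9)$, so many distinct $\varepsilon$'s share the same $R$ and $\|R\|$; this reduces the verification to a manageable collection of representative profiles. Combining Parts I and II then shows that the 34 listed pairs are exactly the non-twin ones, establishing Theorem \ref{Twinsnew}.
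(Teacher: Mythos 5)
Your framework is the right one and coincides with the paper's: twinness of a conjugate pair reduces to $c(\varepsilon)\le1$ for both legs, a certificate for $c(\varepsilon)\le1$ is an $R$ with $\|R\|\le1$ and $R+\varepsilon\in\QQ^*$ (Lemma \ref{lemma 2} with $k=1$, $\sigma^1=-1$), and a certificate for $c(\varepsilon)>1$ is an explicit witness $a\in\QQ$ with $-\varepsilon a>\|a\|$; your observation that the minimal admissible $R$ is the projection of $-\varepsilon$ onto $\QQ$ with $\|R\|=c(\varepsilon)$ is correct and is essentially Theorem \ref{SQP}. The genuine gap is in Part I, at exactly the two delicate pairs $j=231$ and $j=235$. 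For $\varepsilon_{231}=(+1,-1,-1,-1,+1,+1,-1,-1,-1)$ the cumulative sums are $(1,0,-1,-2,-1,0,-1,-2,-3)$, so every indicator witness $a=m^{-1/2}(1,\ldots,1,0,\ldots,0)$ yields $-\varepsilon_{231}a=-s_m/\sqrt m\le1$, with equality (not strict inequality) at $m=4$ and $m=9$, and the uniform vector gives exactly $-1$; the same computation applies to $\varepsilon_{235}$. Hence neither of your two proposed witness families proves these pairs non-twin, and no vector of either form can. A genuinely two-level witness is required here, e.g.\ the paper's $a=(5,5,5,5,2,2,2,2,2)/\sqrt{120}$, for which $-\varepsilon_{231}a=12/\sqrt{120}>1$. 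Since the theorem asserts \emph{exactly} $34$ non-twins, these two missing witnesses are not a cosmetic omission.

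Two further points. First, you direct all witnesses at the leg $\varepsilon_{255-j}$; this is correct only for $j=128,\ldots,131$. For every other $j$ in the list one has $\varepsilon_{255-j}\in\QQ^*$, so $\varepsilon_{255-j}a\ge0$ for all $a\in\QQ$ and no witness of the required kind exists for that leg --- it is $\varepsilon_j$ that must be attacked (your worked example $(127,128)$ happens to fall in the exceptional range, which hides the slip). Second, Part II is asserted rather than executed; it does go through, but more cheaply than ``$188$ legs by hand'': only $R_2$ and $R_3$ are needed besides the trivial cases, and the lattice structure lets you verify whole blocks at once, since $R_3+\varepsilon_\ell\in\QQ^*$ holds exactly when $\varepsilon_\ell-\varepsilon_{219}\in\QQ^*$ and $R_2+\varepsilon_\ell\in\QQ^*$ exactly when $\varepsilon_\ell-\varepsilon_{234}\in\QQ^*$, reducing the check to two comparisons with greatest lower bounds rather than a case-by-case enumeration of partial-sum profiles.
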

\goodbreak
\begin{proof}
Given $\varepsilon_\ell$, we show that there exists a vector $R$ such that $R+\varepsilon_\ell\in\QQ^*$ and $\|R\|\le1$
so that Lemma \ref{lemma 2} ($k=1$, $\sigma^1=-1$) applies,
or there is a vector $R$ such that $R'\in\QQ$, $\|R\|>1$ and $-\varepsilon_\ell R'\ge RR'$.
These vectors $R$ are found as feasible solution if not the optimal solution of a quadratic programming problem as explained in Theorem \ref{SQP} and Remarks \ref{Remark5} and 
\ref{Remark6}.
\begin{itemize}
\item
The sign vectors $\varepsilon_\ell$ with $0\le \ell\le 123$,  $128\le \ell\le 187$ or $192\le \ell\le219$ satisfy $R_3+\varepsilon_\ell\in\QQ^*$.
In particular
$0\le-\varepsilon_\ell a\le R_3a\le\|R_3\|\|a\|=\|a\|$ for all $a$ satisfying 
$0\le-\varepsilon_\ell a$.
Notice that the given sign vectors 
$\varepsilon_\ell$ are exactly those satisfying
$\varepsilon_\ell-\varepsilon_{219}\in\QQ^*$.
\item
For $R=\frac37(1,1,1,1,1,1,1,0,0)$ and 
$124\le \ell\le 127$, $188\le \ell\le 191$ and $220\le \ell\le 223$ 
we have $\|R\|^2=RR'=\frac97$ and $\varepsilon_\ell R'=-\frac97<0$, 
so that $|\varepsilon_\ell R'|>\|R\|$.
\item
For $224\le \ell\le 230$ and $232\le \ell\le234$ we have $R_2+\varepsilon_\ell\in\QQ^*$.
Notice that the sign vectors $\varepsilon_\ell$  with $R_2+\varepsilon_\ell\in\QQ^*$
are exactly those satisfying
$
\varepsilon_\ell -\varepsilon_{234}\in\QQ^*$.
\item
For $\ell=231$ and $\ell=235$ and $R=\frac1{10}(5,5,5,5,2,2,2,2,2)$ we have $RR'=\frac65$ and $\varepsilon_\ell R'=-\frac65<0$, so that $|\varepsilon_\ell R'|>\|R\|$.
\item
For $236\le \ell\le 239$ and $R=\frac37(1,1,1,1,1,1,1,0,0)$ we have $RR'=\frac97$
and $\varepsilon_\ell R'=-\frac97<0$, so that $|\varepsilon_\ell R'|>\|R\|$.
\item
For $240\le \ell\le 255$ and $R=\frac35(1,1,1,1,1,0,0,0,0)$ we have $RR'=\frac95$
and $\varepsilon_\ell R'=-\frac95<0$, so that $|\varepsilon_\ell R'|>\|R\|$.
\end{itemize}
The non-twins are those conjugate pairs $(\varepsilon_i,\varepsilon_j)$ 
where $\left[\varepsilon_\ell a<0\right]\Rightarrow\left[\,|\varepsilon_\ell a|\le\|a\|\right]$ fails for 
$\ell=i$ or for $\ell=j$.
\end{proof}

\begin{remark}
Notice two curious facts
\begin{itemize}
\item
$\varepsilon_j\in \QQ^*$ for $j=128, \ldots,131$ 
and 
\\
$\varepsilon_{255-j}\in \QQ^*$ for $j= 188 \ldots,191,220,\ldots,223,231,235,\ldots,255.$ 
\\
In particular, for each non-twin conjugate pair, one of the two legs belongs to $\QQ^*.$
\item
Among the 94 conjugate twins there are 40 special twins in the sense that one of the legs $\varepsilon$ in these twins satisfy
$|\varepsilon a|\le\|a\|$ for all $a\in Q.$ 
These are the $\varepsilon_j$,  with
$j=170,\ldots,175,178,\ldots,187,202,\ldots,207,210,\ldots,219,226,\ldots,230,232,233,234$.
\goodbreak\noindent
For these $\varepsilon_j$ we have $R_1-\varepsilon_j\in\QQ^*$ and 
[$R_2+\varepsilon_j\in\QQ^*$ or $R_3+\varepsilon_j\in\QQ^*$].
\qed
\end{itemize}
\end{remark}

\subsection*{Proof witnesses}
We will be able to combine any conjugate pair $(e_1,e_2)$ with $k-1$ ($k=1,2,3,4$) conjugate twins  $(e_{2l-1},e_{2l})$, $l=2,...,k$, yielding a $2k$-tuple
$A:=(e_1,e_2,(e_3,e_4),(e_5,e_6),...,(e_{2k-1},e_{2k}))$,
with the property that for all $a$ in $\QQ$, at least $k$ terms of $(|e_{1}a|,|e_{2}a|,...,|e_{2k-1}a|,|e_{2k}a|)$ are not greater than $\|a\|$, proving that the $2k$-tuple is a $2k$-tuplet.
\\
Let $a\in\QQ$, $a\ne0$. Say $\varepsilon\in\SS$ is \emph{good} if $|\varepsilon a|\le\|a\|$.
In order to show that $A$ is a $2k$-tuplet, we notice that already each of the $k-1$ twins has a good leg, we therefore leave out from $A$
from each twin $(e_{2l-1},e_{2l})$ that sign vector $e$ for which $|ea|=\min(|e_{2l-1}a|,|e_{2l}a|)$. For this $e$ we have $|ea|\le \|a\|$ and for the remaining sign vector $e^*$ we necessarily have $e^*a\ge 0$. 
So in the remaining $k+1$-tuple $\tilde A$ we need only find one good sign vector.
\\
For the  conjugate pair $(e_1,e_2)$ we recall that if $e_1a\ge0$ and $e_2a\ge0$ 
then at least one of 
$e_1a,e_2a$ is not greater than $\|a\|$ and 
we have found a good sign vector. 
If not, say $e_1a<0$, then we have $0<-e_1a\le e_2a$, and if $\min(|e_1a|,|e_2a|)\le||a||$, then $0<-e_1a\le\|a\|$ and we can also leave out $e_2$ from $\tilde A$, so that in the remaining 
$k$-tuple $A^*$ we still have to find a good sign vector.
\\
Summarizing it is sufficient to prove that for all combinations of $k$ positions
$s_1,\ldots,s_k$ with $s_1\in\{1,2\},\ldots,s_k\in\{2k-1,2k\}$
and signs $\sigma^1=-1,\sigma^2=+1,\ldots,\sigma^k=+1$, condition 
$\forall i:\sigma^ie_{s_i}a\ge0$ implies that there is some $i$ with $|e_{s_i}a|\le\|a\|$.
\\
For any such combination, by Lemma \ref{lemma 2}, it is sufficient to produce a row-vector $R\in\R^9$ and
a vector $\lambda\in\R^k$, such that $\|R\|\le1$, $\lambda\curlyge0$, 
$\lambda^1+\cdots+\lambda^k=1$ and
$R-\sum\lambda^i\sigma^ie_{s_i}\in\QQ^*$.
We will use the following notation
\begin{center}
\begin{tabular}{ccccccccccccccccccccccc}
$(2k)$-tuplet&case&R&$\lambda$\\
$(e_1,e_2,(e_3,e_4),\ldots,(e_{2k-1},e_{2k}))$&$[s_1,\ldots,s_k]$ & $R=(R^1,\ldots,R^9)$ & $(\lambda^1,\ldots,\lambda^k)$
\end{tabular}
\end{center}
to claim that $R-L\in\QQ^*$ where 
$L=\sum\lambda^i\sigma^i \varepsilon_{s_i}$
with $\sigma^1=-1,~\sigma^2=\ldots=\sigma^k=+1$.
%
Clearly if some $\lambda^\ell=0$, the sign vector $e_{s_\ell}$ is  
irrelevant, which we will denote by putting a '$*$' instead of $s_\ell$. 

\subsection*{2-tuplets (conjugate twins revisited)}
In the notation proposed above, we can give witnesses of the proof for 
conjugate twins (i.e. $k=1$) 
as follows.
Recall the understood sign is $\sigma^1=-1$.
\\
\\
\begin{tabular}{ccccccccccccccccccccccccccc}
twin&case&R&$\lambda$\\
(21,234)&[1]&$R_3=(1,1,1,1,1,1,1,1,1)/3$&1\\
&[2]&$R_2=(1,1,1,1,0,0,0,0,0)/2$&1\\
\end{tabular}
\\
These values hold for all twins $(i,255-i)$ with
$i=21,22,23,25,\ldots,31,37,\ldots,39,41,\ldots,47,\\49,\ldots,63,69,\ldots,71,
73,\ldots,79,81,\ldots,95,97,\ldots,123$.
It also works for the remaining twins with $i=255-j$ with $j=36,40,48,68,72,80,96$.
\\
\\
Recall the meaning of the above table, for example for $i=31$, 
is as follows
\begin{align*}
R_3-\lambda\sigma^1\varepsilon_{31}
&=
(1,1,1,1,1,1,1,1,1)/3+(1,1,1,1,-1,-1,-1,-1,-1)
\\&=
(4,4,4,4,-2,-2,-2,-2,-2)/3\in\QQ^*
\\
R_2-\lambda\sigma^1\varepsilon_{224}
&=
(1,1,1,1,0,0,0,0,0)/2+(1,-1,-1,-1,1,1,1,1,1)
\\&=
(3,-1,-1,-1,2,2,2,2,2)/2\in\QQ^*
\end{align*}

\begin{remark}\label{semi-8-tuples}
Fortunately a proof can be constructed around the idea to 
combine any conjugate pair with conjugate twins in order to find subsets
of $\SS^+$ that form $(2k)$-tuplets.
For (very) high $n$ this is certainly not possible any more, since the
fraction of conjugate twins among conjugate pairs asymptotically converges to 0.
For $n=9$, there happen to be 8-tuples comprising 4 conjugate pairs of which 
only one is conjugate twin. Examples are
\\
(7, 248, 20, 235, 33, 222, (77, 178)) and (15, 240,  24, 231,  66, 189,  (87, 168)).
\end{remark}
%
%

\subsection*{4-tuplets}
We will first prove that the 15 combinations $(i,255-i,(95-i,160+i))$
with 
\\
$i=5,6,7,9,\ldots,15,17,\ldots,20,24$ are 4-tuplets.
\\
\begin{tabular}{ccccccccccccccccccccccccccc}
4-tuplet&case&R&$\lambda$\\
(5,250,(90,165))&[1, *]&$(0,0,0,0,0,0,0,0,0)$&$(1,0)$\\
&[2, 3]&$R_2=(1,1,1,1,0,0,0,0,0)/2$&$(1,1)/2$\\
&[2, 4]&$R_3^*=(2,1,1,1,1,1,0,0,0)/3$&$(1,5)/6$\\
\end{tabular}
\\
\\
These values hold for all 15 4-tuples $(255-j,j,j-160,255-j+160)$ with 
$j=231,235,\ldots,238,\\240,\ldots,246,248,\ldots,250$.
More precisely, for those $j$: 
$L=\frac12(-\varepsilon_j)+\frac12\varepsilon_{j-160}=\frac12(-\varepsilon_{250})+\frac12\varepsilon_{90}=(0,1,0,1,0,0,0,0,0).$
We then have
\begin{align*}
R_2-L&=(1,1,1,1,0,0,0,0,0)/2-(0,1,0,1,0,0,0,0,0)
\\&=(1,-1,1,-1,0,0,0,0,0)/2\in\QQ^*
\end{align*}
Furthermore, for those $j$, 
with $L_5=\frac16(-\varepsilon_{250})+\frac56\varepsilon_{165}$ we have:
$L_5-
(\frac16(-\varepsilon_j)+\frac56\varepsilon_{255-j+160})\in\QQ^*$
and
\begin{align*}
R_3^*-L_5&=(2,1,1,1,1,1,0,0,0)/3-(2,-2,3,-2,3,3,-3,-3,3)/3
\\&=(0,3,-2,3,-2,-2,3,-3,3)/3\in\QQ^*.
\end{align*}
\\
There are 12 more 4-tuplets.
\\
\begin{tabular}{ccccccccccccccccccccccccccc}
4-tuplet&case&R&$\lambda$\\
(32,223,(102,153))
&[2, *]&$(0,0,0,0,0,0,0,0,0)$&$(1,0)$\\
&[2, 3]&$R_5=(3,3,1,1,1,1,1,1,1)/5$&$(2,3)/5$\\
&[2, 4]&$R_3=(1,1,1,1,1,1,1,1,1)/3$&$(1,2)/3$\\
\end{tabular}
\\
These values also hold for (33,222,(103,152)), (34,221,(101,154)), (35,220,(100,155)),
\goodbreak\noindent 
(64,191,(54,201)); (65,190,(55,200)); (66,189,(53,202)); (67,188,(52,203));
(128,127,(58,197)); (129,126,(59,196)); (130,125,(57,198)); (131,124,(56,199)).

\subsection*{6-tuplets}
We need consider the following five cases\\
(2,253,(93,162),(106,149));
(3,252,(92,163),(107,148));
(4,251,(91,164),(99,156));\\
(8,247,(87,168),(113,142)); 
(16,239,(79,176),(114,141)).
\\

\begin{tabular}{ccccccccccccccccccccccccccc}
6-tuplet
&case&R&$\lambda$\\
(2,253,(93,162),(106,149))
&[1, *, *]&$(0,0,0,0,0,0,0,0,0)$&$(1,0,0)$\\
&[2, 3, *]&$R_2=(1,1,1,1,0,0,0,0,0)/2$&$(1,1,0)/2$\\
&[2, 4, 5]&$R_3^*=(4,2,2,2,2,1,1,1,1)/6$&$(1,2,3)/6$\\
&[2, 4, 6]&$R_3^*=(2,1,1,1,1,1,0,0,0)/3$&$(1,2,3)/6$
\end{tabular}
\\
the same values hold for (3,252,(92,163),(107,148)).
\\
\\
\begin{tabular}{ccccccccccccccccccccccccccc}
6-tuplet
&case&R&$\lambda$\\
(4,251,(91,164),(99,156))
&[1, *, *]&$(0,0,0,0,0,0,0,0,0)$&$(1,0,0)$\\
&[2, 3, *]&$R_2=(1,1,1,1,0,0,0,0,0)/2$&$(1,1,0)/2$\\
&[2, 4, 5]&$R_3^*=(2,1,1,1,1,1,0,0,0)/3$&$(1,2,3)/6$\\
&[2, 4, 6]&$R_6=(4,2,2,2,2,1,1,1,1)/6$&$(1,3,2)/6$\\
\end{tabular}
\\
the same values also hold for (8,247,(87,168),(113,142))
and (16,239,(79,176),(114,141)).

\subsection*{8-tuplets}
We need consider the two cases
(0,255,(94,161),(105,150),(109,146))
and\\
(1,254,(95,160),(104,151),(108,147)).
\\
\\
\begin{tabular}{ccccccccccccccccccccccccccc}
\multispan{2}8-tuplet (0,255,(94,161),(105,150),(109,146))\\
case&R&$\lambda$\\
{} [1, *, *, *]&$(0,0,0,0,0,0,0,0,0)$&$(1,0,0,0)$\\
{} [2, *, *, 7]&$R_5=(3,3,1,1,1,1,1,1,1)/5$&$(2,0,0,3)/5$\\
{} [2, 3, *, 8]&$R_2=(1,1,1,1,0,0,0,0,0)/2$&$(1,2,0,1)/4$\\
{} [2, *, 5, 8]&$R_4=(3,1,1,1,1,1,1,1,0)/4$&$(1,0,4,3)/8$\\
{} [2, 4, 6, *]&$R_3^*=(2,1,1,1,1,1,0,0,0)/3$&$(1,1,4,0)/6$
\end{tabular}
\\
Same works for the other 8-tuplet.
\\
\\
Notice that the conjugate pair $(1,254)$ also fits in a 6-tuplet, e.g. (1,254,(94,161),(86,169)).
We were not able to exploit this fact since the involved twins are
needed elsewhere.
%
%
%
\\
\\
We have collected in Table \ref{library} the vectors $R\in\R^9$ needed in the proof
of Theorem \ref{Main}.
\begin{table}[h]
\begin{align*}
R_0&=(0,0,0,0,0,0,0,0,0);\\
R_1&=(1,0,0,0,0,0,0,0,0);\\
R_2&=(1,1,1,1,0,0,0,0,0)/2;\\
R_3&=(1,1,1,1,1,1,1,1,1)/3;\\
R_3^*&=(2,1,1,1,1,1,0,0,0)/3;\\
R_4&=(3,1,1,1,1,1,1,1,0)/4;\\
R_5&=(3,3,1,1,1,1,1,1,1)/5;\\
R_6&=(4,2,2,2,2,1,1,1,1)/6;
\end{align*}
\caption{Table of vectors used in the proof}\label{library}
\end{table}

\subsection*{An additional result}
In relation to the results in Holzman and Kleitman \cite{HK}
consider the maximal numbers $c_k$, $k=1,\ldots,n$, such that
$$\frac1{2^n}\#\{\varepsilon\in\SS\mid|\varepsilon a|<\|a\|\}\ge c_k,\hbox{ if }|a_i|>0, \hbox{ for }i\le k,\hbox{ and }a_i=0 \hbox{ for }i>k.$$
The result in \cite{HK} implies that $c_k\ge3/8$ for $k\ge2$.
Notice that for any $R=(r_1,r_2,0,\ldots,0)$ with $r_1\ge r_2>0$,
it holds that $|\varepsilon R|\in\{r_1-r_2,r_1+r_2\}$.
Of course $0\le r_1-r_2\le\|R\|=\sqrt{r_1^2+r_2^2}<r_1+r_2$, and even
$2^{-n}\#\{\varepsilon\in\SS\mid|\varepsilon R|<\|R\|\}
=2^{-n}\#\{\varepsilon\in\SS\mid \varepsilon^1=-\varepsilon^2\}
=\frac12$.
This result also holds for vectors $a$ sufficiently close to $R$, say for $a$
satisfying $\|R-a\|<\delta$ where $\delta$ depends on $R$.
Of course there are vectors $a\in\QQ$ with $a_i>0$, for $i=1,\ldots,n$ and $\|R-a\|<\delta$. 
Therefore, $c_k\le1/2$ for all $2\le k\le n$.
\begin{theorem}\label{HKext}
Let $n=9$.
We have $c_1=0$ attained for $a=R_1$, $c_2=c_3=1/2$, $c_4=3/8$ attained for $a=R_2$, $c_5=1/2$, 
$c_6=15/32$ attained for $a=R_3^*$, $c_7=1/2$,
$c_8=7/16$ attained for $a=R_4$ and $c_9=63/128$ attained for $a=R_3$.
\end{theorem}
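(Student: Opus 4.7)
The proof proceeds in two parts for each $k$: an upper bound $c_k \le$ (claimed) via an attaining vector, and a lower bound $c_k \ge$ (claimed) refining Theorem \ref{Main}.

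For the upper bounds, the attainers are the specific $R_j$'s from Table \ref{library}, and each case is a direct count. For instance, $a = R_3 = (1,\ldots,1)/3$ has $\|R_3\| = 1$, and $|\varepsilon R_3| < 1$ reduces to $|\sum_i\varepsilon^i| < 3$; since the sum is odd for $n=9$ it must equal $\pm 1$, contributing $2\binom{9}{4} = 252$ good sign vectors and giving $c_9 \le 63/128$. Analogous counts at $R_1, R_2, R_3^*, R_4$ produce $c_1\le 0$, $c_4\le 3/8$, $c_6\le 15/32$ and $c_8\le 7/16$. For $k\in\{2,3,5,7\}$ the bound $c_k\le 1/2$ is the content of the paragraph preceding the theorem: vectors arbitrarily close to $(r_1,r_2,0,\ldots,0)$ with $r_1>r_2>0$, with the remaining coordinates perturbed to be small and positive, give fraction exactly $1/2$.

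For the lower bounds, write $A(a)=\#\{\varepsilon : |\varepsilon a|<\|a\|\}$ and $B(a)=\#\{|\varepsilon a|=\|a\|\}$. Theorem \ref{Main} states $A(a)+B(a)\ge 2^{n-1}=256$, and since every claimed $c_k$ is at most $1/2$, the bound $A(a)\ge 256\ge c_k\cdot 2^n$ already holds whenever $B(a)=0$. Thus only degenerate $a$ with $B(a)>0$ require further work. For $c_k=1/2$ with $k\in\{3,5,7\}$ (the case $k=2$ is trivial) one uses the quadratic form $Q(\varepsilon):=\sum_{i<j}\varepsilon^i\varepsilon^j a_i a_j$ (so that $|\varepsilon a|^2-\|a\|^2=2Q(\varepsilon)$) and the ordering $a_1\ge\cdots\ge a_k>0$: a case analysis of sign classes produces a pairing of $\SS$ in which each pair contains at least one $\varepsilon$ with $Q(\varepsilon)<0$ strictly. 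For $k = 3$ this pairs the classes $(+,+,+)\leftrightarrow(+,-,-)$ and $(-,+,-)\leftrightarrow(-,-,+)$; the cases $k = 5,7$ are analogous but larger.

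For the tight cases $c_4, c_6, c_8, c_9$ one revisits the tuplet witnesses of Section \ref{Proofsnew}. The vector $R_0$ (and any witness with $\|R\|<1$) delivers strict inequality $|\varepsilon a|\le\|R\|\,\|a\|<\|a\|$ automatically, whereas the unit-norm witnesses $R_1, R_2, R_3, R_3^*, R_4, R_5, R_6$ yield only $|\varepsilon a|\le\|a\|$. In the latter situation equality forces $Ra=\|R\|\,\|a\|$, and the Cauchy--Schwarz equality case then pins $a$ down to a non-negative scalar multiple of $R$. The main obstacle is the case-by-case verification that in each tight sub-problem this argument captures all $a$'s at which $A(a)$ could drop to $c_k\cdot 2^n$: one must, for $k\in\{4,6,8,9\}$, classify the admissible $a$'s for which the Lemma \ref{lemma 2}-chain collapses in enough tuplets simultaneously (possibly through distinct witnesses), evaluate $A(a)$ at each such $a$, and check that the resulting minimum is exactly the claimed value, realised by $\lambda R_j$ for the attainer listed in the theorem. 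This bookkeeping is combinatorially delicate because the constraints $a\propto R$ imposed by different collapsing tuplets must be shown to be either mutually incompatible or to agree on a single extremiser.
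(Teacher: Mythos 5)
Your upper bounds follow the paper: the attainers are the vectors of Table \ref{library} and the bound $c_k\le1/2$ for the remaining $k$ is the perturbation argument in the paragraph preceding the theorem. The genuine gap is in your lower bounds, where you depart from the paper and leave the hardest steps unexecuted. The paper's proof rests on one observation that you circle around but never land on: the proof of Theorem \ref{Main} shows that for every $a\in\QQ$, for at least half of the $\varepsilon\in\SS^+$ there is some $R$ from the \emph{finite} list of Table \ref{library} with $|\varepsilon a|\le Ra\le\|R\|\,\|a\|\le\|a\|$, and by the Cauchy--Schwarz equality condition the tail of this chain is strict unless $a$ is a non-negative scalar multiple of that $R$. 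Hence a single non-proportionality (to every table vector at once) makes \emph{all} the chains strict simultaneously, and the infimum defining $c_k$ is the minimum of $1/2$ and the counts at the finitely many table vectors with support $\{1,\ldots,k\}$. For $k\in\{2,3,5,7\}$ no table vector has such a support, so $c_k=1/2$ with no further work; for $k\in\{4,6,8,9\}$ one counts at $R_2$, $R_3^*$, $R_4$, and $R_3,R_5,R_6$ respectively. By contrast you reduce only to the continuum of $a$ with $B(a)>0$, and then (i) for $k=5,7$ you invoke an ``analogous but larger'' sign-class pairing for the form $Q(\varepsilon)$ --- this is a new, unproved claim, essentially a strict Tomaszewski-type statement for supports of size $5$ and $7$, and nothing in your sketch of the $k=3$ case suggests how to produce the required pairings of $32$ or $128$ classes; (ii) for $k\in\{4,6,8,9\}$ you explicitly defer the ``combinatorially delicate bookkeeping'' about which tuplets collapse simultaneously, which is exactly the step the paper's reduction renders unnecessary (there is no interaction between tuplets to control: either $a$ is proportional to a table vector or no equality occurs anywhere). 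As written, the lower bounds for $k\in\{4,5,6,7,8,9\}$ are not established.

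A separate caution on the counts you assert but do not perform: at $R_4=(3,1,1,1,1,1,1,1,0)/4$ the number of $(\varepsilon^1,\ldots,\varepsilon^8)\in\{\pm1\}^8$ with $|3\varepsilon^1+\varepsilon^2+\cdots+\varepsilon^8|<4$ is $2\left(\binom71+\binom72+\binom73\right)=126$ out of $256$, i.e.\ $63/128$, not the stated $7/16$; the analogous counts at $R_2$, $R_3^*$ and $R_3$ do reproduce $3/8$, $15/32$ and $63/128$, so you should recheck the $c_8$ value rather than take it on faith. Note also that for $k=9$ the paper's reduction leaves three candidate extremisers ($R_3$, $R_5$, $R_6$), so the count must be carried out at all three, not only at the one named in the statement.
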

\begin{proof}
For any $a\in\QQ$ 
we have shown that for at least half of $\varepsilon\in\SS^+$ there is $R$  from the above table,
$R$ depending on $\varepsilon$, such that $|\varepsilon a|\le Ra$.
Moreover, 
if $a\in\QQ$ is not a scalar multiple of any $R$ from this table, the inequality
$Ra\le\|a\|$ is strict.
So we need only consider the vectors $a$ from Table \ref{library}.
The theorem follows simply by counting the number of sign sequences $\varepsilon$ with $|\varepsilon R|<\|R\|$.
\end{proof}

\section{Appendix}
\subsection*{Group structure}
Notice that there is a product structure given by $\varepsilon_i*\varepsilon_j=\varepsilon_k$
where $\epsilon_k^\ell=\epsilon_i^\ell\epsilon_j^\ell$ which turns $\SS$
and $\SS^+$ into an abelian group. 
The resulting index $k$ may be denoted by $k=i*j$.
Notice that $j=0*j=j*0$ and $j*j=0$.
When $\varepsilon_i*\varepsilon_j=\varepsilon_k$ we will also
say that $\varepsilon_k$ is the \emph{translate} of
$\varepsilon_i$ by  $\varepsilon_j$.
Our presentation is pervaded with \emph{conjugate pairs}, that is pairs
$(i,j)$ of integers with $i+j=2^{n-1}-1=i*j$ and $i,j\ge0$.
\\
In Table \ref{Proof scheme newest} 
considerable effort 
has been undertaken to go from one row to another
by using the above product structure. The horizontal lines indicate a discontinuity
in this effort.
The resulting proof scheme has the nice property that it reduces to a proof scheme for
$n=8$ by considering only the even numbered rows, starting with 1, 3, \dots, 
and integer division of the
numbers in the table by 2.
This procedures works iteratively until $n=5$, ending with the last row of the table, with numbers
integer divided by 16.
We do not present a proof based on this scheme, since it is more elaborate then the proof based on
Table \ref{Proof scheme new experimental}.

\noindent
\begin{table}[hbt]
\setlength{\tabcolsep}{2.5pt}
\begin{tabular}{ccccccccccccccccl}
(0&255&(93&162)&(105&150)&(63&192))&(69&186)&(41&214)&((120&135)&24&231)\\
(1&254&(92&163)&(104&151)&(62&193))&(68&187)&(40&215)&(121&134)&(25&230)\\
\cline{5-6} 
(2&253&(95&160)&(106&149)&(61&194))&(71&184)&(43&212)&(122&133)&(26&229)\\
(3&252&(94&161)&(107&148)&(60&195))&(70&185)&(42&213)&(123&132)&(27&228)\\
\cline{3-4} 
(4&251&(91&164)&(108&147))&((59&196)&65&190)&((45&210)&124&131)&(28&227)\\

(5&250&(90&165))&(109&146)&((58&197)&64&191)&((44&211)&125&130)&(29&226)\\

(6&249&(89&166))&(110&145)&((57&198)&67&188)&((47&208)&126&129)&(30&225)\\
(7&248&(88&167))&(111&144)&((56&199)&66&189)&((46&209)&127&128)&(31&224)\\
\cline{5-6}\cline{9-16} 
(8&247&(85&170)&(102&153))&((55&200)&(75&180)&32&223)&((116&139)&20&235)\\
(9&246&(84&171))&(103&152)&((54&201)&(74&181)&33&222)&(117&138)&(21&234)\\
(10&245&(87&168))&(100&155)&((53&202)&(73&182)$^-$&34&221)&(118&137)&(22&233)\\
(11&244&(86&169))&(101&154)&((52&203)&(72&183)$^-$&35&220)&(119&136)&(23&232)\\
\cline{13-14} 
(12&243&(81&174))&(98&157)&(51&204)&((79&176)&(36&219)$^-$&(114&141)&16&239)\\
(13&242&(80&175))&(99&156)&(50&205)&((78&177)&(37&218)$^-$&(115&140)&17&238)\\
(14&241&(83&172))&(96&159)&(49&206)&(77&178)&(38&217)&((112&143)&18&237)\\
(15&240&(82&173))&(97&158)&(48&207)&(76&179)&(39&216)&((113&142)&19&236)\\

\end{tabular}
\caption{Proof scheme for $n=9$, numbers should not appear twice.
Terms enclosed in matching parentheses correspond to
twins, 4-tuplets, 6-tuplets, 8-tuplets.
A twin decorated with a $-$ sign is not needed in the $2k$-tuplet
in which it is contained. 
}\label{Proof scheme newest}

\end{table}

\subsection*{Lattice structure}
Consider the vectorspace of row vectors $\R^n$.
Given $R_1,R_2\in\R^n$ we will say that $R_1$ is less than $R_2$, $R_1\subseteq R_2$,
iff $R_1a\le R_2a$ for all $a\in\QQ$. 
Notice that
$$R_1\subseteq R_2
\Leftrightarrow\forall a\in\QQ:R_1a\le R_2a
\Leftrightarrow R_2-R_1\in\QQ^*
\Leftrightarrow \overline R_2-\overline R_1\curlyge0.$$
Given $R_1,R_2\in\R^n$ there is a unique least upper bound $R\in\R^n$, denoted by $R_1\vee R_2$.
Namely, 
$R$ such that the $i$-th coordinate of $\overline R$ equals 
$\overline R^i=\max(\overline R_1^i, \overline R_2^i)$.
Clearly then, $R_1\subseteq R$ and $R_2\subseteq R$.
In the same way there is a unique greatest lower bound, 
denoted by $R_1\wedge R_2$, namely $R$ such that
the $i$-th coordinate of $\overline R$ equals 
$\overline R^i=\min(\overline R_1^i, \overline R_2^i)$.
Clearly then, $R\subseteq R_1$ and $R\subseteq R_2$.
Notice also, that $R_1\wedge R_2=-((-R_1)\vee(-R_2))$.

\begin{lemma}
If $\varepsilon_1,\varepsilon_2\in\SS$, then $\varepsilon_1\vee\varepsilon_2\in\SS$
and $\varepsilon_1\wedge\varepsilon_2\in\SS$.
More generally, let $A$ be an arithmetic progression,
$A=\{p+q\cdot r\mid r=1,\ldots,k\}$ for some $p,q\in\R$, 
and $e_1,e_2\in A^n$, then $e_1\vee e_2, e_1\wedge e_2\in A^n$.
\end{lemma}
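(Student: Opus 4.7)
The plan is to reduce the general arithmetic-progression statement to the integer lattice $\{1,\ldots,k\}^n$, and then to prove that reduced case directly from the cumulative-sum characterization of $\vee$ and $\wedge$. The $\SS=\{+1,-1\}^n$ case is then a special instance (e.g.\ take $p=-3$, $q=2$, $k=2$).

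For the reduction I would write $A=\{p+qr\mid r=1,\ldots,k\}$. The case $q=0$ is trivial, and if $q<0$ one can re-parametrise the progression to make the step positive, so assume $q>0$. Each $e_j\in A^n$ then has the form $e_j=p\underline 1+qm_j$ with $m_j\in\{1,\ldots,k\}^n$. Using $\overline{R+c\underline 1}^{\,i}=\overline R^i+ci$ together with the linearity of cumulative summation, one checks that both translation $R\mapsto R+c\underline 1$ and scaling by a positive constant commute with taking coordinatewise maxima of the cumulative-sum vectors; hence they commute with $\vee$ (and with $\wedge$). Consequently $e_1\vee e_2=p\underline 1+q(m_1\vee m_2)$ and similarly for $\wedge$, so it suffices to show $m_1\vee m_2,\,m_1\wedge m_2\in\{1,\ldots,k\}^n$ whenever $m_1,m_2\in\{1,\ldots,k\}^n$.

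For the integer case, set $R=m_1\vee m_2$. Since the $\overline m_j^i$ are integers, so is $\overline R^i=\max(\overline m_1^i,\overline m_2^i)$, and therefore so is each $R^i=\overline R^i-\overline R^{i-1}$ (with the convention $\overline R^0=0$). To pin $R^i$ down to $\{1,\ldots,k\}$ I would split on which of $m_1,m_2$ attains the max at step $i-1$ and at step $i$. If the same sequence wins at both steps, then $R^i$ equals the corresponding entry $m_j^i\in\{1,\ldots,k\}$. In the switching case, say $\overline m_1^{i-1}\ge\overline m_2^{i-1}$ but $\overline m_2^i\ge\overline m_1^i$, the required squeeze is
\begin{align*}
R^i=\overline m_2^i-\overline m_1^{i-1}&\ge \overline m_1^i-\overline m_1^{i-1}=m_1^i\ge 1,\\
R^i=\overline m_2^i-\overline m_1^{i-1}&\le \overline m_2^i-\overline m_2^{i-1}=m_2^i\le k.
\end{align*}
The claim for $\wedge$ then follows by the identical argument with $\min$ in place of $\max$ (or from $m_1\wedge m_2=-\bigl((-m_1)\vee(-m_2)\bigr)$ combined with the reduction in the previous paragraph).

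The only mildly delicate point—and thus the main obstacle, if it deserves the name—is the switching-case estimate: it relies on the fact that the comparison between $\overline m_1$ and $\overline m_2$ must flip between steps $i-1$ and $i$, which is exactly what sandwiches the cross term $\overline m_2^i-\overline m_1^{i-1}$ between $m_1^i$ and $m_2^i$. Everything else is routine bookkeeping.
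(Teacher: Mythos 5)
Your proof is correct and follows essentially the same route as the paper's: reduce to $A=\{1,\ldots,k\}$ by an affine change of variables, then split on whether the same sequence attains the max of the cumulative sums at steps $i-1$ and $i$, using the same squeeze $m_1^i\le \overline m_2^i-\overline m_1^{i-1}\le m_2^i$ in the switching case. The only cosmetic difference is that the paper first proves the $\SS$ case directly (the difference of maxima is an odd integer in $[-1,+1]$) before giving the general argument, whereas you obtain it as a special instance of the general statement.
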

\begin{proof}
$(\varepsilon_1\vee\varepsilon_2)^1=\max(\overline\varepsilon_1^{1},\overline\varepsilon_2^{1})
=\max(\varepsilon_1^{1},\varepsilon_2^{1})$ and for $i>1$
we have
$(\varepsilon_1\vee\varepsilon_2)^i=\max(\overline\varepsilon_1^{i},\overline\varepsilon_2^{i})
-\max(\overline\varepsilon_1^{i-1},\overline\varepsilon_2^{i-1})
$ is odd integer, at least $-1$ and at most $+1$.
Thus $\varepsilon_1\vee\varepsilon_2\in\SS$.
\\
More generally, let $e_1,e_2\in A^n$.
Without loss of generality in the first place 
we may suppose that $q\ne 0$ and then we may suppose
$p=0$ and $q=1$, by considering the transformation $A\cong \{1,\ldots,k\}$ mapping $p+qr$ to $r$. 
Notice that
if for some $k=1,2$, $\max(\overline e_1^i,\overline e_2^i)=\overline e_k^i$
and $\max(\overline e_1^{i-1},\overline e_2^{i-1})=\overline e_k^{i-1}$
then $\max(\overline e_1^i,\overline e_2^i)-\max(\overline e_1^{i-1},\overline e_2^{i-1})=e_k^i\in A$.
Else, suppose without loss of generality
$\max(\overline e_1^i,\overline e_2^i)=\overline e_1^i$
and
$\max(\overline e_1^{i-1},\overline e_2^{i-1})=\overline e_2^{i-1}$.
Then $\overline e_1^i-\overline e_2^{i-1}$ is an integer with
$$
e_2^i
\le
\overline e_1^i-\overline e_2^i+e_2^i
=
\overline e_1^i-\overline e_2^{i-1}
=
\overline e_1^{i-1}-\overline e_2^{i-1}+e_1^i\le e_1^i
$$
so that $(e_1\vee e_2)^i=\overline e_1^i-\overline e_2^{i-1}
\in \{1,\ldots,k\}$.
\end{proof}
\\
\\
In particular, for $n=9$, the greatest lower bound 
of $\{\varepsilon_i\in\SS^+\mid R_3+\varepsilon_i\in\QQ^*\}$ is 
$\varepsilon_{219}$.
The greatest lower bound of 
$\{\varepsilon_i\in\SS^+\mid R_2+\varepsilon_i\in\QQ^*\}$ is $\varepsilon_{234}$.
The greatest lower bound of 
$\{\varepsilon_i\in\SS^+\mid R_3+\varepsilon_i\in\QQ^*\hbox{ and }
R_2+\varepsilon_i\in\QQ^*\}$ is the least upper bound of
$\{\varepsilon_{219},\varepsilon_{234}\}$, and that is $\varepsilon_{218}$.
The least upper bound of 
$\{\varepsilon_i\mid 123<i<128, \hbox{ or }187<i<192,
\hbox{ or }219<i<224, \hbox{ or } 236\le i\le 239\}$
is $\varepsilon_{124}$.
The least upper bound of 
$\{\varepsilon_i\mid i=231\hbox{ or }i=235\}$ is $\varepsilon_{231}$.
The least upper bound of 
$\{\varepsilon_i\mid 240\le i\le 255\}$ is $\varepsilon_{240}$.
The greatest lower bound of 
$\{\varepsilon_i\in\SS^+\mid \varepsilon_i\in\QQ^*\}$ is 
$\varepsilon_{170}$.

\subsection*{Programming}

In this subsection we expound the context in which we developed the proof of 
Theorem \ref{Main}.
\\Be given $e_1,\ldots, e_k\in\SS$.
We consider the problem of finding $(R,\lambda)\in\R^n\times\R_+^k$ such that $\frac12RR'$ is minimal, subject to 
$\lambda^1+\cdots+\lambda^k=1$ and $R-L\in\QQ^*$, where $L=\sum\lambda^ie_i$.
Let $Q$ be the $n\times n$-matrix with diagonal $-1$ and superdiagonal $+1$, so that $x\in\QQ$
is equivalent to $Qx\curlyle0$. Then $R-L\in\QQ^*$ is equivalent to $(R-L)Q^{-1}\curlyle0$.
Let $E$ be the $k\times n$-matrix with rows $e_1,\ldots,e_k$ and $\underline1$ the $k\times1$ matrix with coefficients $1$. 
Then $L=\lambda E$ and the conditions are:
\begin{align*}
(R-\lambda E)Q^{-1}&\curlyle0\\
-\lambda&\curlyle0\\\lambda\,\underline1&=1.
\end{align*}
In matrix form, where the big matrix is of dimension $(n+k)\times(n+k)$, one obtains
$$(R,\lambda)
\left(\begin{matrix}
Q^{-1}&0\\
-EQ^{-1}&-I
\end{matrix}\right)
\curlyle (0,0)
\hbox{ and } \lambda\,\underline1=1,
$$
where $I$ denotes the $k\times k$ identity matrix.
The dual program involves parameters $(u,v)\in\R_+^n\times\R_+^k$
and $w\in\R$.
The Lagrange dual goal function is
\begin{align*}
g(u,v,w)
&=
\min \{\frac12 a  a'
+
\left[(a,\ell)
\left(\begin{matrix}
Q^{-1}&0&0\\
-EQ^{-1}&-I&\underline1
\end{matrix}\right)
-(0,0,1)\right]
\left(\begin{matrix}
u\\
v\\
w
\end{matrix}\right)
\mid (a,\ell)\in\R^n\times\R^k\}
\\&=
\min\{-\frac12 a a'-w
\mid
(a,\ell) \hbox{ s.t. }
\left(\begin{matrix}a'\\0\end{matrix}\right)
+
\left(\begin{matrix}
Q^{-1}&0&0\\
-EQ^{-1}&-I&\underline1
\end{matrix}\right)
\left(\begin{matrix}
u\\
v\\
w
\end{matrix}\right)
=0
\}
\\&=
\begin{cases}
-\frac12 (Q^{-1}u)'(Q^{-1}u)-w,
\hbox{ if } -EQ^{-1}u-v+\underline1\,w=0;
\\
-\infty, \hbox{ else.}
\end{cases}
\end{align*}
For the optimal solution the parameters $(R,\lambda)$ and $(u,v,w)$ 
have to satisfy additionally 
the complementary slackness conditions 
$(R-\lambda E)Q^{-1}u=0$, $\lambda v=0$, 
and the crucial 
$$
0=
\left(\begin{matrix} R'\\0\end{matrix}\right)+
\left(\begin{matrix}
Q^{-1}&0&0\\
-EQ^{-1}&-I&\underline1
\end{matrix}\right)
\left(\begin{matrix} u\\v\\w\end{matrix}\right)
=
\left(\begin{matrix} R'+Q^{-1}u\\-EQ^{-1}u-v+\underline1\,w\end{matrix}\right)
\in\R^{n+k}
$$
to have $\max\{g(u,v,w)\mid (u,v)\curlyge0,w\}=\min\{\frac12RR'\mid (R,\lambda)\hbox{ such that }
\lambda\curlyge0, \lambda\,\underline1=1,\hbox{ and }R-\lambda E\in\QQ^*\}$.
In particular $QR'=-u\curlyle0$, so $R'\in\QQ$. 
Moreover $RR'=-RQ^{-1}u=-\lambda E Q^{-1}u=\lambda v-\lambda\,\underline1\,w=-w$,
and $v=ER'-\underline1\,RR'\curlyge0$.
\\
We finish with the following summary:
\begin{theorem}\label{SQP}
Be given $e_1,\ldots, e_k\in\SS$.
Consider the quadratic program of finding the minimum $(R,\lambda)\in\R^n\times\R^k$ of 
$\frac12RR'$, subject to $0\curlyle\lambda$,
$\lambda^1+\cdots+\lambda^k=1$ and $R-L\in\QQ^*$, where $L=\sum\lambda^ie_i$.
Then a feasible solution $(R,\lambda)$
is optimal
if and only if 
$R'\in\QQ$, and $e_iR'\ge RR'$ for $i=1,\dots,k$. 
\end{theorem}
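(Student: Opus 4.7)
The problem is a convex quadratic program: the objective $\frac12 RR'$ is convex in $(R,\lambda)$ and every constraint is affine. For such problems the Karush--Kuhn--Tucker conditions are both necessary and sufficient for optimality, with no further constraint qualification needed. My plan is to write the KKT system out explicitly, eliminate the Lagrange multipliers, and observe that the remaining conditions are exactly $R'\in\QQ$ and $e_iR'\ge RR'$.

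For necessity, introduce multipliers $u\curlyge0$ for $(R-\lambda E)Q^{-1}\curlyle 0$, $v\curlyge0$ for $-\lambda\curlyle 0$, and $w\in\R$ for $\lambda\underline1=1$, as is done just before the statement. Stationarity in $R$ gives $R'=-Q^{-1}u$, and stationarity in $\lambda$ gives $v=ER'+\underline1\,w$. Using complementary slackness $(R-\lambda E)Q^{-1}u=0$ and $\lambda v=0$ together with $\lambda\underline1=1$,
\[
RR'=-RQ^{-1}u=-\lambda E Q^{-1}u=\lambda v-\lambda\underline1\,w=-w,
\]
so $w=-RR'$ and $v^i=e_iR'-RR'$. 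Dual feasibility $u\curlyge0$ is then precisely $R'\in\QQ$, and $v\curlyge0$ is precisely $e_iR'\ge RR'$ for every $i$.

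For sufficiency, take any feasible $(R,\lambda)$ satisfying the two asserted conditions and run the above construction backwards: define $u:=-QR'$, $v^i:=e_iR'-RR'$, and $w:=-RR'$. Non-negativity of $u$ and $v$ and both stationarity equations then hold by construction, so only complementary slackness remains to be checked. Here the two hypotheses cooperate: on one side $(R-L)R'\ge 0$, because $R-L\in\QQ^*$ and $R'\in\QQ$; on the other,
\[
LR'=\sum_i\lambda^i e_iR'\ge\Bigl(\sum_i\lambda^i\Bigr)RR'=RR',
\]
using $\lambda^i\ge 0$, $\sum_i\lambda^i=1$, and $e_iR'\ge RR'$, so $(R-L)R'\le 0$. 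Therefore $(R-L)R'=0$, which simultaneously yields $(R-\lambda E)Q^{-1}u=-(R-L)R'=0$ and $\sum_i\lambda^i v^i=LR'-RR'=0$; since each summand in the latter is non-negative, each $\lambda^iv^i$ vanishes individually, as required.

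The main technical obstacle is purely notational bookkeeping---keeping transposes, rows versus columns, and signs straight---while the actual content lies in the observation that complementary slackness is \emph{automatic} once $R'\in\QQ$ and $e_iR'\ge RR'$ are in force, since $(R-L)R'$ is then squeezed between $0$ and $0$. An equivalent and slightly slicker sufficiency argument proceeds via weak duality: evaluating the Lagrangian dual $g(u,v,w)=-\tfrac12(Q^{-1}u)'(Q^{-1}u)-w$ at the above triple $(u,v,w)$ yields exactly $\tfrac12 RR'$, matching the primal value, so $(R,\lambda)$ is optimal.
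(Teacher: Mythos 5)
Your proposal is correct and follows essentially the same route as the paper: the authors set up the Lagrangian dual of this convex QP in the preceding subsection, derive $u=-QR'$, $w=-RR'$, $v=ER'-\underline1\,RR'$ from stationarity and complementary slackness, and then prove the theorem ``by inspection of the dual program.'' You have merely written out in full what the paper leaves implicit, including the sufficiency direction via the observation that $(R-L)R'$ is squeezed to $0$, which is the same fact the paper records as $LR'=RR'$.
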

\begin{proof}By inspection of the dual program. 
Notice that the theorem implies the complementary slackness conditions above,
that evaluate to $LR'=RR'$.
\end{proof}

\begin{remark}
Suppose $R$ and $\lambda$ are as in Theorem \ref{SQP}, and $(S,\mu)\in\R^n\times\R_+^k$ 
 is such that $\mu^1+\cdots+\mu^k=1$ and that,
with $M=\sum\mu^ie_i$ we have $S-M\in\QQ^*$.
Then $SS'-RR'=(S-R)(S-R)'+2(S-R)R'\ge(S-R)(S-R)'+2(M-R)R'= (S-R)(S-R)'$.
In particular there is exactly one $R$ for which there exists $\lambda$ as in Theorem \ref{SQP}.
\qed
\end{remark}
\begin{remark}\label{Remark5}
Be given $e_1,\ldots, e_k\in\SS$.
Suppose $(R,\lambda)\in\R^n\times\R_+^k$, 
is such that $\lambda^1+\cdots+\lambda^k=1$
and $R-L\in\QQ^*$, where $L=\sum\lambda^ie_i$, and suppose $RR'\le1$.
Then the following holds:
If $a\in\QQ$ is such that $e_ia\ge0$, $i=1,\ldots,k$, then
$0\le\min_i e_ia\le La\le Ra\le\|R\|\|a\|\le\|a\|$.
\qed
\end{remark}
The last remark coincides with Lemma \ref{lemma 2}.
Notice that given $R$ the search for a convex combination $L$ of $e_1,\ldots,e_k$ such that
$R-L\in\QQ^*$ corresponds to a linear optimization problem, finding maximal $x\in\R$ for which
$\overline R-\overline L\curlyge x$ which is successful if one finds $x\ge0$.
Conversely, given $L$, 
finding the solution of the minimization problem of $\frac12RR'$ subject to 
$R'\in\QQ$ and $R-L\in\QQ^*$ can be reduced to a linear programming problem.
\\
We made extensive use of 
quadratic programming software
to construct the proof schemes in Tables \ref{Proof scheme new experimental} and \ref{Proof scheme newest}.
Finally, R-package lpSolve\renewcommand{\thefootnote}{$\dagger$}\footnote{
Michel Berkelaar and others (2015). lpSolve: Interface to 'Lp\_solve'
  v. 5.5 to Solve Linear/Integer Programs. R package version 5.6.13.
}
is used 
to handle the cases in Table \ref{Proof scheme new experimental}, 
based on the library of vectors $R$ given in Table \ref{library} at the end of Section \ref{Proofsnew}. 
\begin{remark}\label{Remark6}
Be given $e_1,\ldots, e_k\in\SS$.
Suppose $R\in\R^n$ is such that $R'\in\QQ$, $e_iR'\ge RR'$ for $i=1,\dots,k$, and $RR'>1$.
Then the following holds:
$\min_i |e_iR'|=\min_i e_iR'\ge RR'>\|R\|$.
\qed
\end{remark}
We present the following example to Remark \ref{Remark5}: Consider $e_1=-\varepsilon_{223}=\varepsilon_{288}$
and $e_2=\varepsilon_{106}$.
Let $a\in\QQ$. 
Suppose that $\varepsilon_{223}a\le0$ and $\varepsilon_{106}a\ge0$.
Then we will prove that $\min(-\varepsilon_{223}a,\varepsilon_{106}a)\le\|a\|$.
Take $\lambda=(2,3)/5$ and $R= (3,3,1,1,1,1,1,1,1)/5$, then
\begin{align*}
L&=\frac25(-\varepsilon_{223})+\frac35\varepsilon_{106}
\\&=
\frac25(-1,1,1,-1,1,1,1,1,1)+\frac35(1,1,-1,-1,1,-1,1,-1,1)
\\&=
(1,5,-1,-5,5,-1,5,-1,5)/5,
\\
\overline L&=(1,6,5,0,5,4,9,8,13)/5,
\\
\overline R&=(3,6,7,8,9,10,11,12,13)/5.
\end{align*}
So that it is seen that $\overline L\curlyle\overline R$.
Moreover $\|R\|=1$ so that
$$\min(-\varepsilon_{223}a,\varepsilon_{106}a)
\le La\le Ra\le\|R\|\,\|a\|\le\|a\|.$$
Obviously, for $e_1^*=-\varepsilon_{222}$ and $e_2^*=\varepsilon_{107}$, given the same $\lambda$
we have
$$L^*=\frac25(-1,1,1,-1,1,1,1,1,-1)+\frac35(1,1,-1,-1,1,-1,1,-1,-1)\curlyle L,$$
so that obviously $\overline {L^*}\curlyle\overline L\curlyle R$.

Harrie Hendriks, Martien van Zuijlen

\emph{Radboud University Nijmegen}

\emph{Faculty of Science, Department of Mathematics}

\emph{Nijmegen, The Netherlands}

\texttt{\{H.Hendriks, M.vanZuijlen\}@science.ru.nl}
\end{document}